\documentclass[12pt, a4paper]{article}
\usepackage[english]{babel}
\usepackage[iso-8859-7]{inputenc}
\usepackage{latexsym}
\usepackage{amsmath, amsthm, amssymb}
\usepackage{amssymb}
\usepackage{amsmath}
\usepackage{amsfonts}
\usepackage{amsthm}
\usepackage[all]{xy}
\usepackage{indentfirst}
\input{epsf.tex}

\newtheorem{definition}{Definition}[section]
\newtheorem{theorem}{Theorem}[section]
\newtheorem{proposition}{Proposition}[section]

\newtheorem{lemma}{Lemma}[section]

\newcommand{\nn}{\mathbb{N}}
\newcommand{\cc}{\mathbb{C}}

\newcommand{\rr}{\mathbb{R}}

\newcommand{\di}{\displaystyle}

\newcommand{\st}{\subset}
\newcommand{\sm}{\smallsetminus}

\begin{document}
\title{\bf Disjoint Hypercyclicity for families of Taylor-type Operators}

\author{V. Vlachou}         
\date{}
\maketitle 
\begin{abstract} 
We give necessary and sufficient condition so that we have d-hypercyclicity for  operators who map a holomorphic function to a partial sum of the Taylor expansion. This problem is connected with doubly universal Taylors series and this is an effort to generalize the concept to multiple universal Taylor series.\footnote{ 2010 Mathematics
subject classification: 47A16 (47B38, 41A30).\\
\textbf{Keywords: } universal Taylor series, multiple universality, disjoint hypercyclicity.}
\end{abstract}
\noindent
\section{Introduction }
In the last 30 years, many authors have worked on the notion of hypercyclicity and  important advances in the research 
have been made, under several points of view. Roughly speaking, hypercyclicity means existence of a dense orbit. More recent papers have introduced and 
studied a new notion, the disjoint hypercyclicity i.e. 
the existence of a common vector with dense orbit for several operators, such
that the approximation of any fixed vectors is also simultaneously performed
by using a common subsequence. Our goal is to study disjoint hyperclycity for families of Taylor-type Operators.

Let us be more specific and  give the precise definition of  hypercyclicity (for more details see \cite{Bayart} and \cite{GE}).
\begin{definition}
Let $X,Y$ be  two topological vector spaces over $\mathbb{K}=\rr \text{ or } \cc$. A sequence of linear and continuous operatos $T_n: X\to Y, n=1,2,\ldots$ 
is said to be \textit{hypercyclic} if there exists a vector $x\in X$ so that the sequence
$$
\{ T_1x, T_2x,\ldots \},
$$
is dense in $Y$. In this case the vector $x$ will be called hypercyclic for $\{ T_n \}_{n\in\nn}$ and the symbol $HC(\{
T_n\}_{n\in\nn} )$ stands for the set of hypercyclic vectors for $\{ T_n\}_{n\in\nn}$. If the sequence $\{ T_n \}_{n\in\nn}$ comes from the
iterates of a single operator $T:X\to X$, i.e. $T_n=T^n$, $n=1,2,\ldots $ then $T$ is called hypercyclic and the set of
hypercyclic vectors for $T$ is denoted by $HC(T)$.
\end{definition}
We are now ready to give the definition of disjoint hypercyclicity as introduced in  \cite{bernal} and  \cite{bes}.
\begin{definition}\label{dh}
Let $\sigma_0\in\nn$ and $X$ , $Y_1, Y_{2},\ldots, Y_{\sigma_0}$ be topological vector spaces over $\mathbb{K}=\rr \text{ or } \cc$.  For each $\sigma\in\{1,2,\ldots,\sigma_0\}$ consider a sequence of linear and continuous operators $T_{\sigma,n}:X\to Y_{\sigma}$, $n=1,2,\ldots
$. We say that the sequences  $\{ T_{\sigma,n} \}_{n\in\nn}, \ \sigma=1,2,\ldots,\sigma_0$  are disjoint hypercyclic if the sequence  $[T_{1,n}, T_{2,n},\ldots, T_{\sigma_0,n}]:X\to Y_1\times Y_2\times\ldots Y_{\sigma_0}$ defined as:
$$ [T_{1,n}, T_{2,n},\ldots, T_{\sigma_0,n}](x)=(T_{1,n}(x), T_{2,n}(x),\ldots, T_{\sigma_0,n}(x))$$
is hypercyclic where $ Y_1\times Y_2\times\ldots Y_{\sigma_0}$ is assumed to be endowed with the product topology.
\end{definition}
The notion of d-hypercyclicity has been studied by many authors (see for example  \cite{bernal}- \cite{bes}) and it is a strong property which reflects in some sense
the density of the diagonal orbit.  Intresting  questions and problems have been studied in this setting and they have inspired G. Costakis and N.Tsirivas (see \cite{ct}) to
consider a similar question in the setting of universal Taylor series. We would like to continue along the same path of research (see also \cite{cv}).  

 So, let us   describe the specific operators that interest us.
We fix a simply connected domain $\Omega\st\cc$ and a point $\zeta_0\in\Omega$.
 We denote by $H(\Omega)$  the space
of functions, holomorphic in $\Omega$, endowed with the topology of uniform convergence on compacta. 
Moreover, for a compact set $K\subset\cc$, we denote
$$\mathcal{A}(K)=\{g\in H(K^o): \ g \text{ is
continuous on } K\}$$ 
 $$\mathcal{M}=\{K\st
\cc: K \ \text{compact set and } \ K^c \text{ connected
set}\}$$
and
 $$\mathcal{M}_{\Omega}=\{K\st
\cc\sm\Omega: K \ \text{compact set and } \ K^c \text{ connected
set}\}$$
For a function  $g$ defined on $K$, we use the notation $\di||g||_K=\sup_{z\in K} |g(z)|$.\\
Now for every  $K\in\mathcal{M}_{\Omega}$ and every sequence of natural numbers  $\{\lambda_{n}\}_{n\in\nn}$ we consider the sequence of operators:
$$T^{(\zeta_0)}_{\lambda_n}:H(\Omega)\to A(K), \ \ n=1,2,\ldots$$
$$T^{(\zeta_0)}_{\lambda_n}(f)(z)=\sum_{k=1}^{\lambda_{n}}\frac{f^{(k)}(\zeta_0)}{k!}(z-\zeta_0)^k,  \ \ n=1,2,\ldots.$$ 

V. Nestoridis  in \cite{N3} (see also \cite{N2}) proved that if the sequence $\{\lambda_n\}_{n\in\nn}$   is unbounded then the corresponding sequence of operators $\{T^{(\zeta_0)}_{\lambda_n}\}_{n\in\nn}$ is hypercyclic. 

In the first part of this work,  we consider a finite collection of sequences of operators of the above type and we study the problem of   disjoint hypercyclicity. This result generalizes the results in \cite{ct} and \cite{cv} on doubly universal Taylor series, where this porblem was investigated in the special case of two sequences of operators. Our tools include concepts and theorems from potential theory for which we would like to refer to \cite{R}. 
Lately, several authors have used potential theory in problems concerning universality (see \cite{cv}-\cite{ga4}, \cite{ma}, \cite{vmy1}-\cite{M-Y}, \cite{ts}).

In the second part, we deal with a special (finite) choice of sequences  of natural numbers and using Ostrowski-gaps we prove that the d-hyperciclic vectors are independent of the choice of $\zeta_0$. We use methodes and ideas used in \cite{GLM}, \cite{M-Y} (see also \cite{Luh} and \cite{N1}).


\section{D-Hypercyclicity for Taylor-type Operators}
\begin{definition} Let $\{\lambda^{(\sigma)}_n\}_{n\in\nn}$ , $\sigma=1,2,\ldots\sigma_0$ be a finite collection of sequences of natural numbers. A function $f\in H(\Omega)$ belongs to the class $U^{(\zeta_0)}_{mult}(\{\lambda^{(1)}_n\}_{n\in\nn}, \{\lambda^{(2)}_n\}_{n\in\nn},\ldots, \{\lambda^{(\sigma_0)}_n\}_{n\in\nn})$, if for every choice of compact sets $K_1,K_2,\ldots, K_{\sigma_0}\in\mathcal{M}_{\Omega}$  the set  
$$\{(T^{(\zeta_0)}_{\lambda^{(1)}_n}(f),T^{(\zeta_0)}_{\lambda^{(2)}_n}(f),\ldots,T^{(\zeta_0)}_{\lambda^{(\sigma_0)}_n}(f) )
: n\in\nn\}$$
is dense in $A(K_1)\times A(K_2)\times\ldots\times A(K_{\sigma_0})$.
\end{definition}
The main goal of  this section is to give necessary and suficient conditions  so that the above defined class of functions is non-empty.  Note that the functions of this class are disjoint hypercyclic vectors, for the sequences of operators we considered for every choice of compact sets $K_1,K_2,\ldots, K_{\sigma_0}\in\mathcal{M}_{\Omega}$.\\
\textbf{Remark:} The class  $U^{(\zeta_0)}_{mult}(\{\lambda^{(1)}_n\}_{n\in\nn}, \{\lambda^{(2)}_n\}_{n\in\nn},\ldots, \{\lambda^{(\sigma_0)}_n\}_{n\in\nn})$ is independent of the order with which we consider the sequences $\{\lambda^{(\sigma)}_n\}_{n\in\nn}$ , $\sigma=1,2,\ldots\sigma_0$.

 Nevertheless, in order to state our result we need to consider a specific arrangement for these sequences. 
\begin{definition}
Let $\{\lambda^{(\sigma)}_n\}_{n\in\nn}$, $\sigma=1,2,\ldots,\sigma_0$, $\sigma_0\in\nn$ be a finite number of sequences of natural numbers. We say that these sequences are well ordered if
$$\limsup_n\frac{\lambda^{(\sigma+1)}_n}{\lambda^{(\sigma)}_n}\geq \limsup_n\frac{\lambda^{(\sigma)}_n}{\lambda^{(\sigma+1)}_n}, \  \sigma=1,2,\dots,\sigma_0-1.$$
\end{definition}
\begin{lemma} Let $\{\lambda^{(\sigma)}_n\}_{n\in\nn}$, $\sigma=1,2,\ldots,\sigma_0$ be a finite number of sequences of natural numbers. There exists a rearrangement $\{\lambda^{(\pi(\sigma))}_n\}_{n\in\nn}$, $\sigma=1,2,\ldots,\sigma_0$ which is well ordered.
\end{lemma}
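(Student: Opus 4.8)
The plan is to recast the well-ordering condition as a purely combinatorial statement about tournaments. For each ordered pair $(i,j)$ of distinct indices in $\{1,2,\ldots,\sigma_0\}$, set
$$R(i,j)=\limsup_n\frac{\lambda^{(j)}_n}{\lambda^{(i)}_n}\in[0,+\infty],$$
and declare a directed edge $i\to j$ whenever $R(i,j)\geq R(j,i)$. Since $R(i,j)$ and $R(j,i)$ are both elements of the totally ordered set $[0,+\infty]$, for every pair at least one of the two inequalities holds; breaking ties arbitrarily we obtain a \emph{tournament} $T$ on the vertex set $\{1,\ldots,\sigma_0\}$ (a complete directed graph with exactly one edge per pair). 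By the very definition of well-orderedness, a rearrangement $\pi$ is well ordered precisely when $\pi(1)\to\pi(2)\to\cdots\to\pi(\sigma_0)$ is a directed Hamiltonian path in $T$. Thus the lemma reduces to the assertion that every finite tournament possesses a Hamiltonian path.

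I would then prove this assertion (Rédei's theorem) by induction on the number $\sigma_0$ of vertices. The cases $\sigma_0=1,2$ are immediate. For the inductive step, suppose $v_1\to v_2\to\cdots\to v_m$ is a Hamiltonian path through the first $m=\sigma_0-1$ vertices, and let $w$ be the remaining vertex, which must be inserted. If $w\to v_1$, prepend $w$ to obtain $w\to v_1\to\cdots\to v_m$; if $v_m\to w$, append it. Otherwise, since $T$ is a tournament, the failure of $w\to v_1$ forces $v_1\to w$, and the failure of $v_m\to w$ forces $w\to v_m$. Let $r$ be the smallest index with $w\to v_r$; this exists because $w\to v_m$, and $r\geq 2$ because $v_1\to w$. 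By minimality $v_{r-1}\to w$, so $v_{r-1}\to w\to v_r$ and we may splice $w$ in to get the Hamiltonian path $v_1\to\cdots\to v_{r-1}\to w\to v_r\to\cdots\to v_m$. Relabelling the vertices of this path as $\pi(1),\ldots,\pi(\sigma_0)$ yields the desired rearrangement.

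The main obstacle — and the reason the statement is not entirely trivial — is that the relation $i\to j$ is \emph{not} transitive in general: it is perfectly possible to have three sequences forming a directed $3$-cycle $i\to j\to k\to i$, so one cannot simply ``sort'' the sequences according to a linear order induced by $\preceq$. All the content lies in the insertion argument above, which produces a Hamiltonian path despite the absence of transitivity, using only the completeness of the relation (that every pair is comparable). Completeness itself is free here because it merely reflects that any two extended reals $R(i,j)$ and $R(j,i)$ are comparable; one should only keep in mind that the $\limsup$'s are to be read in $[0,+\infty]$ so that the comparison is always defined.
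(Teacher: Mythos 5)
Your proposal is correct, and it is essentially the paper's own argument in cleaner packaging: the paper also proceeds by induction on the number of sequences, inserting each new sequence into an already well-ordered chain by successive adjacent swaps (which amounts exactly to your splicing step in the Hamiltonian path), and it likewise uses only the totality of the comparison, never transitivity. Recasting the statement as R\'edei's theorem on tournaments is a tidy clarification, but not a different method.
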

\begin{proof}
${}$\\
Step 1: If the sequences  $\{\lambda^{(1)}_n\}_{n\in\nn}$ and  $\{\lambda^{(2)}_n\}_{n\in\nn}$ satisfy the inequality
$$\limsup_n\frac{\lambda^{(2)}_n}{\lambda^{(1)}_n}\geq \limsup_n\frac{\lambda^{(1)}_n}{\lambda^{(2)}_n}$$
we take no action. If they do not satisfy the inequility we interchange their positions and then the inequility will be satisfied.\\
Step 2: Assume that the inequility is satisfied for $\sigma=1,2,\ldots,\sigma_1$, for some $\sigma_1\in\{1,\ldots,\sigma_0-2\}$. We will find a rearrengement so that the inequility is satisfied for  $\sigma=1,2,\ldots,\sigma_1+1$.
First we compare the sequences $\{\lambda^{(\sigma_1+1)}_n\}_{n\in\nn}$ and  $\{\lambda^{(\sigma_1+2)}_n\}_{n\in\nn}$ . If they also satisfy the inequility, we take no action and the result follows. If they do not satisfy the inequility we interchange them, so that the inequility is satisfied for $\sigma=\sigma_1+1$. Now we need to compare (the new) $\{\lambda^{\sigma_1+1}_n\}_{n\in\nn}$ with  $\{\lambda^{(\sigma_1)}_n\}_{n\in\nn}$.
If necessary we interchange them. In this case note that the inequility will hold for $\sigma=\sigma_1$ and it will still hold for 
$\sigma=\sigma_1+1$ because of our assumption. Continuing this way after a finite numbers of steps we will reach our goal.

Repeating the second step for $\sigma_1=1,2,\ldots,\sigma_0-2$ we will end up with a well ordered rearrangent.
\end{proof}
In view of the above, let us assume that we have a well ordered finite collection of sequences of natural numbers
 $\{\lambda^{(\sigma)}_n\}_{n\in\nn}$, $\sigma=1,2,\ldots,\sigma_0$, $\sigma_0\in\nn$.
 \begin{theorem}\label{main} The class  $U^{(\zeta_0)}_{mult}(\{\lambda^{(1)}_n\}_{n\in\nn}, \{\lambda^{(2)}_n\}_{n\in\nn},\ldots, \{\lambda^{(\sigma_0)}_n\}_{n\in\nn})$ is non-empty, if and only if, there exists a strictly increasing sequence of natural numbers $\{\mu_n\}_{n\in\nn}$ such that 
 $$\lim_{n\to\infty} \lambda^{(1)}_{\mu_n}=+\infty \text{ and } \lim_{n\to\infty}\frac{\lambda^{(\sigma+1)}_{\mu_n}}{\lambda^{(\sigma)}_{\mu_n}}=+\infty, \ \ \sigma=1,2,\ldots,\sigma_0-1.$$
 \end{theorem}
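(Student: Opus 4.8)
The statement is an equivalence, so I would prove the two implications separately; throughout write $S_m$ for the operator $f\mapsto\sum_{k=1}^m\frac{f^{(k)}(\zeta_0)}{k!}(\,\cdot-\zeta_0)^k$ and set $\delta=\mathrm{dist}(\zeta_0,\partial\Omega)$. For \textbf{sufficiency}, assume $\{\mu_n\}$ exists and run a Baire category argument in the Fréchet space $H(\Omega)$. First I would fix a sequence $\{K_m\}\st\mathcal M_\Omega$ exhausting $\mathcal M_\Omega$ (every $K\in\mathcal M_\Omega$ lies in some $K_m$) and, via Mergelyan's theorem, a countable family of polynomial targets dense in each $A(K_m)$; this lets me write $U^{(\zeta_0)}_{mult}(\dots)$ as a countable intersection of the open sets $E$ of those $f$ for which a \emph{single} index $n$ makes all $\sigma_0$ coordinates simultaneously close to a prescribed target tuple. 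As $H(\Omega)$ is completely metrizable, it suffices to show each $E$ is dense, which reduces everything to one approximation lemma: given a polynomial $f_0$, a compact $L\st\Omega$ (which, after enlarging inside $\Omega$, I may take so that $(K_\sigma\cup L)^c$ is connected), an $\varepsilon>0$, compacta $K_1,\dots,K_{\sigma_0}\in\mathcal M_\Omega$ and polynomial targets $p_\sigma$, produce $f\in H(\Omega)$ with $\|f-f_0\|_L<\varepsilon$ and one index $N$ with $\|S_{\lambda^{(\sigma)}_{\mu_N}}f-p_\sigma\|_{K_\sigma}<\varepsilon$ for all $\sigma$.

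I would build $f=f_0+B_1+\dots+B_{\sigma_0}$ as a telescoping sum, where $B_\sigma$ uses only powers $(z-\zeta_0)^k$ with $\lambda^{(\sigma-1)}_{\mu_N}<k\le\lambda^{(\sigma)}_{\mu_N}$ (put $\lambda^{(0)}_{\mu_N}:=\deg f_0$). Since these windows are disjoint and increasing, $B_\sigma$ is invisible to every earlier partial sum $S_{\lambda^{(\sigma')}_{\mu_N}}$, $\sigma'<\sigma$, so I can fix the blocks one at a time. At stage $\sigma$ I need $B_\sigma$ to approximate $\Psi_\sigma:=p_\sigma-(f_0+B_1+\dots+B_{\sigma-1})$ on $K_\sigma$ and to be small on $L$; writing $B_\sigma=(z-\zeta_0)^{a+1}C_\sigma$ with $a=\lambda^{(\sigma-1)}_{\mu_N}$, this amounts to approximating, by a polynomial $C_\sigma$ of degree $\le\lambda^{(\sigma)}_{\mu_N}-a-1$, the function equal to $\Psi_\sigma(z-\zeta_0)^{-(a+1)}$ near $K_\sigma$ and to $0$ near $L$ (holomorphic on a neighbourhood of $K_\sigma\cup L$, as $\zeta_0\notin K_\sigma$). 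By Mergelyan/Walsh this is possible with a geometric rate $\theta^{\deg C_\sigma}$, $\theta\in(0,1)$ depending only on the geometry; the accuracy I actually need is $\varepsilon$ divided by $\max_{K_\sigma\cup L}|z-\zeta_0|^{a+1}$, hence exponentially small in $a$, and the Bernstein--Walsh inequality shows the previous blocks grow at most exponentially in $a$ on $K_\sigma$ as well, so the degree required is $O(\lambda^{(\sigma-1)}_{\mu_N})$. \emph{The crux is that the available budget is $\lambda^{(\sigma)}_{\mu_N}-\lambda^{(\sigma-1)}_{\mu_N}$, and since $\lambda^{(\sigma)}_{\mu_N}/\lambda^{(\sigma-1)}_{\mu_N}\to\infty$ it dwarfs this $O(\lambda^{(\sigma-1)}_{\mu_N})$ once $N$ is large}; this is precisely where the ratio hypothesis is consumed and is the main technical step of this direction.

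For \textbf{necessity} I argue by contradiction, first recording two facts for any $f\in U^{(\zeta_0)}_{mult}$: (a) the Taylor radius of convergence at $\zeta_0$ is exactly $\delta$ (else $f$ extends across a point of $\Omega^c$ inside the disc of convergence, forcing $S_mf$ to converge there and killing density on a small $K\in\mathcal M_\Omega$ around it); and (b) if a target is chosen to differ on $K$ from every $S_jf$, then the corresponding degrees along any successful subsequence tend to $\infty$. The heart is a potential-theoretic lower bound: a polynomial $P$ of degree $\le q$ with a zero of order $>p$ at $\zeta_0$ that stays $\ge c>0$ away from zero on a compact $K\in\mathcal M_\Omega$ must satisfy $q/p\ge M(K)$, where $M(K)$ is governed by the Green function of $K^c$ and the location of $\zeta_0$. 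Intuitively, writing $P=(z-\zeta_0)^{p+1}G$, the factor forces $G$ to approximate $(z-\zeta_0)^{-(p+1)}$ on $K$, and the order-$(p+1)$ pole makes this expensive; the point is that $M(K)$ is \emph{unbounded} over $\mathcal M_\Omega$, blowing up for thin, far-reaching compacta such as long segments, for which $\zeta_0$ sits at arbitrarily small Bernstein/Green distance from $K$.

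To finish, take all $K_\sigma$ equal to such a compact $K=K^{(R)}$ with $M(K^{(R)})\to\infty$ as $R\to\infty$, and choose targets with $\|p_{\sigma+1}-p_\sigma\|_K\ge1$ and each $p_\sigma$ distinct on $K$ from every $S_jf$. Density yields a subsequence along which all coordinates converge; fact (b) gives $\lambda^{(\sigma)}_n\to\infty$ for each $\sigma$, and applying the lower bound to the block $S_{\lambda^{(\sigma+1)}_n}f-S_{\lambda^{(\sigma)}_n}f\to p_{\sigma+1}-p_\sigma$ forces $\lambda^{(\sigma+1)}_n/\lambda^{(\sigma)}_n\ge M(K^{(R)})$ for all $\sigma$ simultaneously along that subsequence. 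Diagonalising over $R\to\infty$ then produces a single strictly increasing $\{\mu_n\}$ with $\lambda^{(1)}_{\mu_n}\to\infty$ and $\lambda^{(\sigma+1)}_{\mu_n}/\lambda^{(\sigma)}_{\mu_n}\to\infty$ for every $\sigma$, as claimed. I expect the delicate step to be the potential-theoretic lower bound together with the verification that $\sup_{K\in\mathcal M_\Omega}M(K)=\infty$, since that is where the precise interaction between the fixed pole order at $\zeta_0$ and the capacity of $K$ must be quantified.
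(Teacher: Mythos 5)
Your \textbf{sufficiency} argument is, in structure and in every essential detail, the paper's own proof: a Baire category reduction over an exhausting family of compacta and countably many polynomial targets, followed by a telescoping block construction in which the block for the $\sigma$-th coordinate occupies the degree window $(\lambda^{(\sigma-1)}_{\mu_N},\lambda^{(\sigma)}_{\mu_N}]$ and is produced by approximating a function equal to $\Psi_\sigma(z)(z-\zeta_0)^{-(a+1)}$ near $K_\sigma$ and to $0$ near $L$; the quantitative estimate you describe inline (geometric rate versus exponential-in-$a$ accuracy, with previous blocks controlled by Bernstein--Walsh) is exactly the paper's Proposition 2.1 on $\{\sigma_n\}$-locally bounded sequences with $\tau_n/\sigma_n\to\infty$. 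So that half is essentially identical and sound.

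The \textbf{necessity} half contains genuine gaps. First, your ``heart'' lemma is false as stated: a polynomial of degree $\le q$, vanishing to order $>p$ at $\zeta_0$ and satisfying $|P|\ge c$ on $K$, need not have $q/p\ge M(K)$. Take $P(z)=c\bigl((z-\zeta_0)/d\bigr)^{p+1}$ with $d=\min_{z\in K}|z-\zeta_0|>0$: it is $\ge c$ in modulus at every point of $K$ and has degree $p+1$, so $q/p\to 1$, for \emph{every} $K\in\mathcal{M}_\Omega$. The rigidity you want comes only from \emph{two-sided pointwise} control, i.e.\ from uniform convergence of the block to a limit function bounded away from $0$ at every point of $K$; your condition $\|p_{\sigma+1}-p_\sigma\|_K\ge 1$ is a sup-norm separation and does not provide this (you would need, e.g., constant targets $p_\sigma\equiv\sigma$). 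Even then, the corrected lemma (which can indeed be proved by a Bernstein--Walsh growth estimate applied on the far portion of a long, thin $K$) is left unproved, and for unbounded $\Omega$ the complement contains no long segments, so producing sets $K^{(R)}\in\mathcal{M}_\Omega$ with $M(K^{(R)})\to\infty$ already requires a construction like the paper's Lemma 2.2.

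Second, and more fundamentally, your argument never invokes the standing \emph{well-ordering} hypothesis, yet the statement is false without it: for $\lambda^{(1)}_n=n^2$, $\lambda^{(2)}_n=n$ the class is non-empty (by the order-independence of $U^{(\zeta_0)}_{mult}$), while $\lambda^{(2)}_{\mu_n}/\lambda^{(1)}_{\mu_n}\to 0$ along every subsequence. The flaw sits exactly where you apply the lower bound: the difference $S_{\lambda^{(\sigma+1)}_n}f-S_{\lambda^{(\sigma)}_n}f$ vanishes to order $\lambda^{(\sigma)}_n+1$ at $\zeta_0$ only when $\lambda^{(\sigma+1)}_n\ge\lambda^{(\sigma)}_n$; otherwise $p$ and $q$ trade places and the lemma bounds $\lambda^{(\sigma)}_n/\lambda^{(\sigma+1)}_n$ instead. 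So at best you obtain divergence of $\max(\lambda^{(\sigma)}_n,\lambda^{(\sigma+1)}_n)/\min(\lambda^{(\sigma)}_n,\lambda^{(\sigma+1)}_n)$, and converting this into divergence of the \emph{ordered} ratios requires a case analysis driven by well-ordering. This disambiguation is a substantial part of the paper's proof, which proceeds quite differently: alternating targets $0$ and $1$ on an increasing family $E_k$ whose union is non-thin at infinity, the rescaling $\bigl(R/(z-\zeta_0)\bigr)^{\lambda^{(1)}_{n_k}}$ turning the block into a polynomial exponentially small on that set, the M\"{u}ller--Yavrian theorem to force compact convergence to $0$, a contradiction at a boundary point, and then a Case I/Case II induction using well-ordering.
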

 First we will prove that the existence of such a sequence $\{\mu_n\}_{n\in\nn}$ implies that the class 
 $U^{(\zeta_0)}_{mult}(\{\lambda^{(1)}_n\}_{n\in\nn}, \{\lambda^{(2)}_n\}_{n\in\nn},\ldots, \{\lambda^{(\sigma_0)}_n\}_{n\in\nn})$ is $G_{\delta}$ and dense subset of $H(\Omega)$.  For this task we need a proposition, which is a modification of the well known theorem of Bernstein-Walsh (theorem 6.3.1  \cite{R}, see also \cite{ct} and \cite{cv}). We would like to note that this idea was also used in  \cite{ct} and \cite{cv}, but the corresponding propositions were not enough for $\sigma_0> 2$. Therefore this proposition is actually the key to obtain the result for more sequences. \\
 To state our proposition in a simple way, we first give a definition.
\begin{definition} Let $h_n:U\to \cc , \ n=1,2,\ldots$ be a sequence of continuous functions   defined on an open  set $U$ and $\sigma_n, \ n=1,2,\ldots$ be a sequence of positive integers.
We say that the sequence  $h_n, \ n=1,2\ldots$ is $\{\sigma_n\}-$locally bounded if for every compact set $K\st U$ the sequence
$||h_n||^{\frac{1}{\sigma_n}}_{K}$ is bounded.
\end{definition}
 \begin{proposition}\label{bw} Let $K\in\mathcal{M}$. For every $\{\sigma_n\}-$locally bounded sequence $\{f_n\}_{n\in\nn}$ of holomorhic functions 
 on an  open neighbourhood $U$ of $K$:
 $$\limsup_n d_{\tau_n}(f_n,K)^{\frac{1}{\tau_n}}\leq\theta<1$$
 where
$$\theta=\begin{cases}\sup_{\cc_\infty\smallsetminus U}exp(-g_{\cc_\infty\smallsetminus K}(z,\infty)), \  &\text{ if }
c(K)>0, \\ 0,  \  &\text{ if }
c(K)=0\end{cases}$$
and $\{\tau_n\}_{n\in\nn}$ is any  sequence of natural numbers such that $\di\lim_n\frac{\tau_n}{\sigma_n}=+\infty$.
  \end{proposition}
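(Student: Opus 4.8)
The plan is to prove a \emph{function-uniform, effective} form of the Bernstein--Walsh estimate and then feed in the sequence, using the hypothesis $\lim_n \tau_n/\sigma_n=+\infty$ to absorb the growth of the $f_n$. Write $g_K(z)=g_{\cc_\infty\sm K}(z,\infty)$ and, for $r>1$, $\Omega_r=\{z:\ g_K(z)<\log r\}$; recall that $d_m(f,K)=\inf\{||f-p||_K:\ p \text{ polynomial},\ \deg p\le m\}$. Suppose first that $c(K)>0$. Since $\theta=\sup_{\cc_\infty\sm U}e^{-g_K}$, every $z\notin U$ satisfies $g_K(z)\ge\log(1/\theta)$, so $\Omega_{1/\theta}\st U$; hence for each fixed $\rho\in(1,1/\theta)$ the sublevel set $\overline{\Omega_\rho}$ is a compact subset of $U$. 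Moreover $\theta<1$, because $g_K$ is continuous and strictly positive on the compact set $\cc_\infty\sm U\st\cc_\infty\sm K$, so it attains a positive minimum there. It therefore suffices to show $\limsup_n d_{\tau_n}(f_n,K)^{1/\tau_n}\le 1/\rho$ for every such $\rho$ and then let $\rho\uparrow 1/\theta$.

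Fix $\rho\in(1,1/\theta)$, let $\Gamma=\partial\Omega_\rho=\{g_K=\log\rho\}$ be the corresponding level cycle (a finite union of analytic curves surrounding $K$, after the usual slight adjustment of $\rho$), with $\mathrm{dist}(K,\Gamma)=\delta>0$, and set $m=\tau_n+1$. Let $\omega_n$ be the node polynomial of the $m$ Fekete points of $K$ and $p_n$ the degree-$\tau_n$ polynomial interpolating $f_n$ at these points. Hermite's contour formula gives, for $z\in K$,
$$
f_n(z)-p_n(z)=\frac{1}{2\pi i}\int_\Gamma \frac{\omega_n(z)}{\omega_n(\zeta)}\,\frac{f_n(\zeta)}{\zeta-z}\,d\zeta ,
$$
so that $d_{\tau_n}(f_n,K)\le ||f_n-p_n||_K\le C(\rho)\,\alpha_n\,||f_n||_{\overline{\Omega_\rho}}$, where $C(\rho)=\mathrm{length}(\Gamma)/(2\pi\delta)$ and $\alpha_n=\sup_{z\in K,\ \zeta\in\Gamma}|\omega_n(z)/\omega_n(\zeta)|$. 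The potential-theoretic core is the claim $\limsup_n\alpha_n^{1/\tau_n}\le 1/\rho$: the identification of the transfinite diameter with the capacity yields $||\omega_n||_K^{1/m}\to c(K)$, while equidistribution of the Fekete points (their normalized zero-counting measures converge weak-$*$ to the equilibrium measure of $K$) gives $|\omega_n(\zeta)|^{1/m}\to c(K)\,e^{g_K(\zeta)}=c(K)\rho$ uniformly on $\Gamma$; dividing and using $m/\tau_n\to1$ gives the claim. In the remaining case $c(K)=0$ we have $\theta=0$ and the estimate is direct: taking any contour $\Gamma\st U$ winding once around $K$ with $\mathrm{dist}(K,\Gamma)=\delta>0$, one has $|\omega_n(\zeta)|\ge\delta^{m}$ on $\Gamma$ while $||\omega_n||_K^{1/m}\to c(K)=0$, so $\alpha_n^{1/\tau_n}\to0$.

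Finally I would insert the sequence. By $\{\sigma_n\}$-local boundedness on the compact set $\overline{\Omega_\rho}\st U$ there is $M<\infty$ with $||f_n||_{\overline{\Omega_\rho}}\le M^{\sigma_n}$ for all $n$, whence
$$
d_{\tau_n}(f_n,K)^{1/\tau_n}\le C(\rho)^{1/\tau_n}\,\alpha_n^{1/\tau_n}\,M^{\sigma_n/\tau_n}.
$$
Passing to $\limsup_n$ we have $C(\rho)^{1/\tau_n}\to1$ (since $\tau_n\to\infty$), $\limsup_n\alpha_n^{1/\tau_n}\le1/\rho$, and $M^{\sigma_n/\tau_n}\to1$ because $\sigma_n/\tau_n\to0$; this last cancellation, made possible exactly by $\lim_n\tau_n/\sigma_n=+\infty$, is precisely what permits a growing sequence $\{f_n\}$ in place of a single fixed function and is the point that was missing in the $\sigma_0=2$ propositions of \cite{ct} and \cite{cv}. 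Thus $\limsup_n d_{\tau_n}(f_n,K)^{1/\tau_n}\le1/\rho$, and letting $\rho\uparrow1/\theta$ finishes the case $c(K)>0$; the case $c(K)=0$ was already settled above. The main obstacle is the uniform limit $|\omega_n(\zeta)|^{1/m}\to c(K)\rho$ on $\Gamma$: weak-$*$ convergence of the Fekete counting measures gives this only pointwise, and upgrading to uniformity requires an equicontinuity argument for the logarithmic potentials $\zeta\mapsto \tfrac1m\log|\omega_n(\zeta)|$ (legitimate since $\Gamma$ stays at distance $\delta$ from $K$, hence from the nodes), together with the standard lower bound for Fekete node polynomials off $K$.
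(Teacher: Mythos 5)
Your strategy is at heart the same as the paper's: interpolate $f_n$ at Fekete points of $K$, express the error by the Hermite/Cauchy contour formula, bound the Fekete factor by Green-function asymptotics, and absorb the growth $\|f_n\|_\Gamma\le M^{\sigma_n}$ through $M^{\sigma_n/\tau_n}\to 1$. This last cancellation is precisely the point of the proposition, and you identify it correctly. The difference is in the potential-theoretic bookkeeping: the paper takes an \emph{arbitrary} cycle $\Gamma\st U\sm K$ with $\mathrm{ind}_\Gamma=1$ on $K$ and $\mathrm{ind}_\Gamma=0$ off $U$, quotes from the proof of Theorem 6.3.1 of \cite{R} the estimate $\limsup_n\bigl(\|q_{\tau_n}\|_K/\min_\Gamma|q_{\tau_n}|\bigr)^{1/\tau_n}\le\sup_\Gamma \exp(-g_{\cc_\infty\sm K}(\cdot,\infty))$, and defers to \cite{R} both the passage from this contour bound to $\theta$ and the case $c(K)=0$; you instead re-derive the Fekete asymptotics from Fekete--Szeg\H{o} plus equidistribution, taking for $\Gamma$ a level curve of the Green's function. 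The uniformity issue you flag at the end is real but is repaired exactly as you indicate (equicontinuity of the potentials at distance $\ge\delta$ from the nodes), and your $c(K)=0$ argument is fine.

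The genuine gap is the contour. You assert that $\Gamma=\partial\Omega_\rho=\{g_K=\log\rho\}$ is, after adjusting $\rho$, ``a finite union of analytic curves \emph{surrounding} $K$''. Adjusting $\rho$ fixes smoothness, not the surrounding: a general $K\in\mathcal{M}$ may have irregular boundary points, and there $K\cup\Omega_\rho$ is not a neighbourhood of $K$, so $\partial\Omega_\rho$ has winding number $0$ at part of $K$. Concretely, take $K=\{0\}\cup[1,2]\in\mathcal{M}$ and $U$ a small $\varepsilon$-neighbourhood of $K$. Since $\{0\}$ is polar, $g_K$ equals $g_{\cc_\infty\sm[1,2]}(\cdot,\infty)$, which near $0$ is close to the positive constant $\log(3+2\sqrt{2})$, whereas $\log(1/\theta)$ is small (it is controlled by points of $\cc\sm U$ close to $[1,2]$); hence for every admissible $\rho$ the level set $\{g_K=\log\rho\}$ is a curve around $[1,2]$ only, with $\mathrm{ind}=0$ at $0\in K$. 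The contour identity then yields $\mathrm{ind}_\Gamma(z)f_n(z)-p_n(z)$ on the left, so it gives no control of $f_n-p_n$ at $z=0$; in fact your displayed inequality $\|f_n-p_n\|_K\le C(\rho)\,\alpha_n\,\|f_n\|_{\overline{\Omega_\rho}}$ is simply false here: take $f_n$ equal to $1$ on the component of $U$ containing $0$ and $0$ on the other component, so that $p_n\equiv 0$ (all data in the formula lives near $[1,2]$), giving $\|f_n-p_n\|_K\ge 1$ while the right-hand side vanishes. The repair is standard but must be said: either work, as the paper does, with a general cycle with the prescribed winding numbers, or replace $\Omega_\rho$ by the compact set $K_\rho=K\cup\{g_K\le\log\rho\}$ (it lies in $U$, and $\cc_\infty\sm K_\rho$ is connected, since a bounded component of $\{g_K>\log\rho\}$ would contradict the extended maximum principle, the irregular points forming a polar set) and take any cycle in $U\sm K_\rho$ with $\mathrm{ind}=1$ on $K_\rho$ and $0$ off $U$; on such a cycle $g_K>\log\rho$, which is all your estimates use, and the rest of your argument, including the equidistribution step, goes through because the cycle stays at positive distance from $K$.
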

  \begin{proof}Assume first that $c(K)>0$.  Following the proof of theorem 6.3.1  in \cite{R}, we consider a closed contour $\Gamma$ in $U\smallsetminus  K$ such that
  $ind_{\Gamma}(z)=1, \ z\in  K$ and  $ind_{\Gamma}(z)=0, \ z\notin U$.
 
Since $\sigma_n\geq 1,n=1,2,\dots$,  $\di\lim_{n\to\infty}\tau_n=+\infty,$  so for $n$ large enough $\tau_n\geq 2$.  
In this case we may consider a  Fekete polynomial $q_{\tau_n}$ of degree $\tau_n$ for $K$ and we define 
 $$p_n(w)=\frac{1}{2\pi i}\int_{\Gamma}\frac{f_n(z)}{q_{\tau_n}(z)}\cdot\frac{q_{\tau_n}(w)-q_{\tau_n}(z)}{w-z}dz, \ w\in K.$$
Then (as in the proof in \cite{R}) $p_n$ is a polynomial of degree at most ${\tau_n-1}$.
Moreover, using Cauchy's integral formula we conclude that:
$$f_n(w)-p_n(w)=\frac{1}{2\pi i}\int_{\Gamma}\frac{f_n(z)}{w-z}\cdot\frac{-q_{\tau_n}(w)}{q_{\tau_n}(z)}dz, \ w\in K$$
Thus,
\begin{equation}\label{co}||f_n-p_n||_{K}\leq \frac{1}{2\pi} \cdot \ell(\Gamma)\cdot \frac{1}{dist(\Gamma, K)}\cdot \frac{||q_{\tau_n}||_K}{\di\min_{z\in\Gamma}|q_{\tau_n}(z)|}\cdot||f_n||_{\Gamma}, \end{equation}
where $\ell(\Gamma)$ is the length of $\Gamma$ and $dist(\Gamma, K)$ is the distance of 
$\Gamma$ from $K$.

Since $f_n$ is $\{\sigma_n\}-$ locally bounded, there exists a positive constant $A>1$ such that $||f_n||_{\Gamma}\leq A^{\sigma_n}$. 

Furthemore
in the proof of theorem 6.3.1  in \cite{R}, it is proved that:
$$\limsup_n \biggl(\frac{||q_{\tau_n}||_K}{\di\min_{z\in\Gamma}|q_{\tau_n}(z)|}\biggl)^{\frac{1}{\tau_n}}\leq \alpha,$$
with $\di\alpha= \sup_{z\in \Gamma}\exp(-g_{\cc_\infty\smallsetminus K}(z,\infty))$.

Thus,
$$\limsup_n  d_{\tau_n}(f_n,K)^{\frac{1}{\tau_n}} \leq  \limsup_n ||f_n-p_n||_{K}^{\frac{1}{\tau_n}}\leq \alpha.$$
(note that $\lim_n C^{\frac{1}{\tau_n}}=1$ and  $\lim_n A^{\frac{\sigma_n}{\tau_n}}=1$).
The rest of the proof is exactly the same as in theorem 6.3.1  in \cite{R}.
  \end{proof}
  \begin{theorem}\label{m}If $\di\lim_n\lambda_n^{(1)}=+\infty$  and $\di\lim_{n\to\infty}\frac{\lambda^{(\sigma+1)}_{n}}{\lambda^{(\sigma)}_{n}}=+\infty, \ \sigma=1,2,\ldots,\sigma_0-1$, then the class  $U^{(\zeta_0)}_{mult}(\{\lambda^{(1)}_n\}_{n\in\nn}, \{\lambda^{(2)}_n\}_{n\in\nn},\ldots, \{\lambda^{(\sigma_0)}_n\}_{n\in\nn})$ is a $G_{\delta}$ and dense subset of $H(\Omega)$.
  \end{theorem}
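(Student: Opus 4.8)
The plan is to realize the class as a countable intersection of open sets and conclude by the Baire category theorem, the density of these sets being the only substantial point. First I would fix, for each $K\in\mathcal{M}$, the countable set of polynomials with coefficients in $\qq+i\qq$, which is dense in $A(K)$ by Mergelyan's theorem, and a countable cofinal family $\{L_m\}_{m\in\nn}\st\mathcal{M}_\Omega$, i.e. one such that every $K\in\mathcal{M}_\Omega$ is contained in some $L_m\in\mathcal{M}_\Omega$. For multi-indices $\vec m=(m_1,\dots,m_{\sigma_0})$, $\vec j=(j_1,\dots,j_{\sigma_0})$ and $s\in\nn$ I set
\[
E(\vec m,\vec j,s)=\Big\{f\in H(\Omega):\ \exists\, n\ \text{with}\ \|T^{(\zeta_0)}_{\lambda^{(\sigma)}_n}(f)-p_{j_\sigma}\|_{L_{m_\sigma}}<\tfrac1s,\ \sigma=1,\dots,\sigma_0\Big\}.
\]
Each $E(\vec m,\vec j,s)$ is a union over $n$ of open sets, hence open. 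Since a target on an arbitrary $K\st L_m$ is first approximated by a rational polynomial on $K$ and that polynomial is entire, a routine check gives $U^{(\zeta_0)}_{mult}(\dots)=\bigcap_{\vec m,\vec j,s}E(\vec m,\vec j,s)$, so it suffices to prove each $E(\vec m,\vec j,s)$ dense in $H(\Omega)$.

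For density I fix $f_0\in H(\Omega)$, a compact $L\st\Omega$ and $\epsilon>0$; write $K_\sigma=L_{m_\sigma}$, $p_\sigma=p_{j_\sigma}$ and $w=z-\zeta_0$. Using that $\Omega$ is simply connected I enlarge $L$ to a compact $\tilde L\st\Omega$ with $\zeta_0\in\tilde L$ and connected complement, arranged so that each $F_\sigma:=K_\sigma\cup\tilde L$ lies in $\mathcal{M}$, and fix (by Runge) a polynomial $\phi_0$ with $\|\phi_0-f_0\|_{\tilde L}$ small. I then build $f=\phi_0+\sum_{\sigma=1}^{\sigma_0}\phi_\sigma$ as a single polynomial, where $\phi_\sigma=w^{\lambda^{(\sigma-1)}_n+1}r_\sigma$ (with $\lambda^{(0)}_n:=\deg\phi_0$) is supported in Taylor degrees $(\lambda^{(\sigma-1)}_n,\lambda^{(\sigma)}_n]$, so that for $n$ large the partial sums telescope:
\[
T^{(\zeta_0)}_{\lambda^{(\sigma)}_n}(f)=\big(\phi_0-\phi_0(\zeta_0)\big)+\phi_1+\cdots+\phi_\sigma=:g_{\sigma-1}+\phi_\sigma .
\]
The blocks are produced inductively: having chosen $\phi_0,\dots,\phi_{\sigma-1}$ I require $\phi_\sigma\approx p_\sigma-g_{\sigma-1}$ on $K_\sigma$ and $\phi_\sigma\approx0$ on $\tilde L$, which amounts to approximating on $F_\sigma$, by the polynomial $r_\sigma$ of degree at most $\lambda^{(\sigma)}_n-\lambda^{(\sigma-1)}_n-1$, the function $H^{(n)}_\sigma$ equal to $(p_\sigma-g_{\sigma-1})/w^{\lambda^{(\sigma-1)}_n+1}$ on $K_\sigma$ and to $0$ on $\tilde L$. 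Since $\zeta_0\notin K_\sigma$ and $K_\sigma\cap\tilde L=\emptyset$, $H^{(n)}_\sigma$ is holomorphic on a fixed neighbourhood of $F_\sigma$, so Proposition \ref{bw} is exactly the tool that will let $\deg r_\sigma$ be of order $\lambda^{(\sigma)}_n$ while keeping the error geometrically small.

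The crux, and the step I expect to be the main obstacle, is verifying the hypothesis of Proposition \ref{bw}, namely that $\{H^{(n)}_\sigma\}_n$ is $\{\lambda^{(\sigma-1)}_n\}$-locally bounded; this forces me to estimate the \emph{cross terms}. On $K_\sigma$ one has $|H^{(n)}_\sigma|\le(\|p_\sigma\|_{K_\sigma}+\|g_{\sigma-1}\|_{K_\sigma})\big/\min_{K_\sigma}|w|^{\lambda^{(\sigma-1)}_n+1}$, so I must bound the earlier blocks $\phi_\tau$ ($\tau<\sigma$) on the \emph{later} set $K_\sigma$, where I have no direct control. Writing $\phi_\tau=w^{\lambda^{(\tau-1)}_n+1}r_\tau$ and passing from the bound for $r_\tau$ on $F_\tau$ to one on $K_\sigma$ by the classical Bernstein--Walsh inequality of \cite{R}, I get $\|\phi_\tau\|_{K_\sigma}\le C\,e^{c\,\lambda^{(\tau)}_n}$; the dominant term $\tau=\sigma-1$ yields $\|g_{\sigma-1}\|_{K_\sigma}\le C'e^{c'\lambda^{(\sigma-1)}_n}$, hence $\|H^{(n)}_\sigma\|_{F_\sigma}^{1/\lambda^{(\sigma-1)}_n}$ is bounded, and an induction on $\sigma$ closes the estimate. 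The whole scheme hinges on the rates matching: the magnitudes grow at the single-exponential rate $\lambda^{(\sigma-1)}_n$, while I approximate at degree $\tau_n=\deg r_\sigma$ of order $\lambda^{(\sigma)}_n$, so $\tau_n/\lambda^{(\sigma-1)}_n\to+\infty$ precisely because $\lambda^{(\sigma)}_n/\lambda^{(\sigma-1)}_n\to+\infty$. Proposition \ref{bw} then gives $\|r_\sigma-H^{(n)}_\sigma\|_{F_\sigma}\le\theta_1^{\tau_n}$ with $\theta_1<1$, so the induced errors on $K_\sigma$ and on $\tilde L$ are at most $(\max_{K_\sigma}|w|)^{\lambda^{(\sigma-1)}_n}\theta_1^{\lambda^{(\sigma)}_n}\to0$. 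The base case $\sigma=1$ uses only $\lim_n\lambda^{(1)}_n=+\infty$, since there $H^{(n)}_1$ is a fixed function.

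Putting this together, for every prescribed tolerance and all $n$ large enough the polynomial $f$ built above satisfies $\|f-f_0\|_L<\epsilon$ together with $\|T^{(\zeta_0)}_{\lambda^{(\sigma)}_n}(f)-p_\sigma\|_{K_\sigma}<1/s$ for every $\sigma$; being entire it lies in $H(\Omega)$, so $f\in E(\vec m,\vec j,s)$ arbitrarily close to $f_0$, which proves density. As $H(\Omega)$ is a Fréchet space, the Baire category theorem yields that $U^{(\zeta_0)}_{mult}(\dots)=\bigcap_{\vec m,\vec j,s}E(\vec m,\vec j,s)$ is a dense $G_\delta$ subset of $H(\Omega)$. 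The only point lying outside the analytic core is the topological arrangement ensuring $F_\sigma\in\mathcal{M}$, which is handled as in \cite{ct} and \cite{cv} from the simple connectivity of $\Omega$.
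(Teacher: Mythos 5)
Your proof is correct and follows essentially the same route as the paper: the same Baire-category reduction to density of the sets $E(\cdot)$, the same decomposition of the perturbation into polynomial blocks with Taylor degrees in $(\lambda_n^{(\sigma-1)},\lambda_n^{(\sigma)}]$ built by finite induction through Proposition~\ref{bw} (with $\sigma_n\sim\lambda_n^{(\sigma-1)}$ and $\tau_n\sim\lambda_n^{(\sigma)}$), and the same use of the Bernstein--Walsh inequality to verify the $\{\sigma_n\}$-local boundedness, which is indeed the crux. The only cosmetic deviations are that the paper absorbs the case $\sigma=1$ into the initial Runge step (choosing $p$ simultaneously close to $g$ on $L$ and to $f_{j_1}$ on $K_{m_1}$, so correction blocks are needed only for $\sigma\geq 2$), and that it bounds the cross terms by applying Bernstein's Lemma to the whole sum $\sum_{k=2}^{\sigma-1}Q_n^{(k)}$, which is small on $L-\zeta_0$, rather than block-by-block from each $F_\tau$ as you do.
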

  \begin{proof}
  Let $\{f_j\}_{j\in\nn}$ be an enumeration of polynomials with rational coefficients. Let, in addition,   $\{K_m\}_{m\in\nn}$ be a sequence of compact sets in $\mathcal{M}_{\Omega}$, such that the following holds: every  $K\in \mathcal{M}_{\Omega}$,  is contained in some $K_{m}$
   (for the existence of such a sequence we refer to\ \cite{N3}).\\
 For every choice of positive integers $s$, $n$, $m_{\sigma},  \ \sigma=1,2,\ldots,\sigma_0$ and  \\$j_{\sigma}, \ \sigma=1,2,\ldots,\sigma_0$, we set:
$$E(\{m_{\sigma}\}_{\sigma=1}^{\sigma_0}, \{j_{\sigma}\}_{\sigma=1}^{\sigma_0},s,n)=
\{f\in H(\Omega): ||T^{(\zeta_0)}_{\lambda_n^{(\sigma)}}-f_{j_{\sigma}}||_{K_{\sigma}}<\frac{1}{s}, \ \sigma=1,2,\ldots,\sigma_0\}$$  
In view of Mergelyan's theorem, it is easy to see that 
 $$U^{(\zeta_0)}_{mult}(\{\lambda^{(1)}_n\}_{n\in\nn}, \{\lambda^{(2)}_n\}_{n\in\nn},\ldots, \{\lambda^{(\sigma_0)}_n\}_{n\in\nn})=$$
 $$=\bigcap_{\{m_{\sigma}\}_{\sigma=1}^{\sigma_0}}\bigcap_{\{j_{\sigma}\}_{\sigma=1}^{\sigma_0}}\bigcap_{s}\bigcup_{n}\\
 E(\{m_{\sigma}\}_{\sigma=1}^{\sigma_0}, \{j_{\sigma}\}_{\sigma=1}^{\sigma_0},s,n).$$
Hence, in view of Baire's Category Theorem, it suffices to prove  that\\ $\di\bigcup_{n}E(\{m_{\sigma}\}_{\sigma=1}^{\sigma_0}\{j_{\sigma}\}_{\sigma=1}^{\sigma_0},s,n)$ is dense in $H(\Omega)$.
(see also proposition 2.3 in \cite{bes}).\\
For this reason we fix $g\in H(\Omega)$, $\varepsilon>0$, and $L\subset \Omega$ compact. Without loss of generality,
we may assume that $L$ has connected complement (note that $\Omega$ is simply connected), $\zeta_0\in L^o$ (if not we work with a larger $L$) and $\lambda_n^{(\sigma+1)}>\lambda_n^{(\sigma)}, \ \ \sigma=1,2,\ldots, \sigma_0-1$
(this holds for $n$ large enough).

In view of Runge's theorem, we may fix a polynomial $p$ such that:
$$||g-p||_{L}<\frac{\varepsilon}{2} \text{ and } ||p-f_{j_1}||_{K_{m_1}}<\frac{1}{s}.$$
Fix  two open and disjoint sets $U_1, U_2$ with
 $L\subset U_1$ and $\cup_{\sigma=1}^{\sigma_0}K_{m_{\sigma}} \subset U_2$.
 
For every $\sigma=2,\ldots,\sigma_0$,  we will construct via a finite induction a sequence of polynomials $\{Q_{n}^{(\sigma)}\}_{n\in\nn}$ with the following properties:\\
$\bullet$ The degree of the  terms of $Q_{n}^{(\sigma)}$ varies between $\lambda_n^{(\sigma-1)}+1$ and $\lambda_n^{(\sigma)}$.\\
$\bullet$ $||Q_n^{(\sigma)}(z-\zeta_0)||_L\xrightarrow{n\to\infty} 0$.\\
$\bullet$ $ ||p(z)+\sum_{k=2}^{\sigma}Q_n^{(k)}(z-\zeta_0)-f_{j_{\sigma}}(z)||_{K_{m_{\sigma}}}\xrightarrow{n\to\infty} 0$.

Let  $\sigma\in\{2,\ldots,\sigma_0\}$. If $\sigma\geq 3$
 assume, in addition, that   the previous  sequences of polynomials have been defined.

We apply proposition \ref{bw} for\\
 $U=(U_1-\zeta_0)\cup(U_2-\zeta_0)$,\\
  $K=(L-\zeta_0)\cup (K_{m_{\sigma}}-\zeta_0)$,\\
$f_n(z)=\begin{cases}z^{-\lambda_n^{(\sigma-1)}-1}g_n(z), &z\in U_2-\zeta_0, \\ 0, &z\in U_1-\zeta_0\end{cases}$,\\
where $g_n(z)= f_{j_{\sigma}}(z+\zeta_0)-p(z+\zeta_0)-\sum_{k=2}^{\sigma-1}Q_n^{(k)}(z)  $\\ $\{\sigma_n\}=\{\lambda_n^{(\sigma-1)}+1\}$ and $\{\tau_n\}=\{\lambda_n^{(\sigma)}-(\lambda_n^{(\sigma-1)}+1)\}$.\\
Note that in case $\sigma=2$ we need to set $g_n(z)= f_{j_{2}}(z+\zeta_0)-p(z+\zeta_0)$.\\
Let us stress out why the sequence of functions $\{f_n\}_{n\in\nn}$ is $\{\sigma_n\}-$ locally bounded.
We will deal with the case $\sigma>2$. \\
Let $\tilde{K}\st U$ be a compact set. Since $f_n$ are zero on $U_1-\zeta_0$ we may assume that $\tilde{K}\st (U_2-\zeta_0)$. 

 For every $n$,  the function  $\sum_{k=2}^{\sigma-1}Q_n^{(k)}(z)$ is a polynomial of degree at most 
$\lambda_n^{(\sigma-1)}$.

Our assumption implies that $||Q_n^{(k)}(z-\zeta_0)||_L\to 0$, therefore for $n$ large enough 
$$ ||\sum_{k=2}^{\sigma-1}Q_n^{(k)}(z)||_{L-\zeta_0}<1.       $$
In view of Bernstein's Lemma (a) (see \cite{R} p.156),  if $d_n$ is the degree of $\sum_{k=2}^{\sigma-1}Q_n^{(k)}(z)$
we have:
$$\biggl|\sum_{k=2}^{\sigma-1}Q_n^{(k)}(z)\biggl|^{\frac{1}{d_n}}\leq e^{g_D(z,\infty)}\biggl|\biggl|\sum_{k=2}^{\sigma-1}Q_n^{(k)}(z)\biggl|\biggl|^{\frac{1}{d_n}}_{L-\zeta_0}<e^{g_D(z,\infty)},$$ 
for $D=\cc_{\infty}-(L-\zeta_0)$ and $z\in D\smallsetminus\{\infty\}$. The compact set $L-\zeta_0$ is non-polar since it contains an open disk of center 0. The function $e^{g_D(z,\infty)}$ is bounded and continuous on $\tilde{K}$. Thus we may choose 
$A=\di\max_{z\in\tilde{K}}|e^{g_D(z,\infty)}|+1$. Then:
$$ \biggl|\biggl|\sum_{k=2}^{\sigma-1}Q_n^{(k)}(z)\biggl|\biggl|_{\tilde{K}}^{\frac{1}{d_n}} < A\Rightarrow
 \biggl|\biggl|\sum_{k=2}^{\sigma-1}Q_n^{(k)}(z)\biggl|\biggl|_{\tilde{K}}<A^{d_n}\leq A^{\lambda_n^{(\sigma-1)}+1}=A^{\sigma_n}$$
 We are ready to return to the functions $f_n$:
 $$||f_n||_{\tilde{K}}\leq \biggl(\max_{z\in\tilde{K}}\frac{1}{|z|}\biggl)^{\sigma_n}(C+A^{\sigma_n}),$$
where $C=||f_{j_{\sigma}}-p||_{\tilde{K}+\zeta_0}$ and there result follows. \\
The last argument suffices for the case 
$\sigma=2$
as well (set $A=0$.)
\\
Since all the requirements of the proposition \ref{bw}  are fulfilled 
we conclude that:
$$\limsup_n d_{\tau_n}(f_n,K)^{\frac{1}{\tau_n}}\leq \theta<1,\ \text{ for a suitable } \theta<1.$$ 
Hence if we fix $\theta_0\in (\theta,1)$, there exists $n_0\in \nn$ with:
$$ d_{\tau_n}(f_n,K)^{\frac{1}{\tau_n}}<\theta_0, \ \ n\geq n_0. $$
It is now apparent that we can fix a sequence of polynomials $p_n$ with degree less or equal to $\tau_n$ such that:
\begin{equation}\label{theta}
||f_n-p_n||_{K}<\theta_0^{\tau_n}, \ n\geq n_0.
\end{equation}
We set $Q_n^{(\sigma)}(z)=z^{\lambda_n^{(\sigma-1)}+1}\cdot p_n(z), \ n\in\nn$.\\
Obviously, the degree of the terms of $Q_n^{(\sigma)}$ varies between $ \lambda_n^{(\sigma-1)}+1$ and $\lambda_n^{(\sigma)}$, so the first requirement is satisfied.\\
For the second requirement we set $M=||z-\zeta_0||_{L\cup K_{m_{\sigma}}}+1$ and we have:
$$||Q_n^{(\sigma)}(z-\zeta_0)||_{L}\leq M^{\lambda_n^{(\sigma-1)}+1}\cdot ||p_n||_{L-\zeta_0}\leq$$
$$\leq M^{\lambda_n^{(\sigma-1)}+1}||p_n-f_n||_{K}<M^{\lambda_n^{(\sigma-1)}+1}\theta_0^{\lambda_n^{(\sigma)}},$$
where we have used relation (\ref{theta}).

It is easy to see that $||Q_n^{(\sigma)}(z-\zeta_0)||_{L}\to 0$.\\
We are ready to proceed to the third requirement:
$$||p(z)+\sum_{k=2}^{\sigma}Q_n^{(k)}(z-\zeta_0)-f_{j_{\sigma}}(z)||_{K_{m_{\sigma}}}=$$
$$=||f_{j_{\sigma}}(z+\zeta_0)-p(z+\zeta_0)-\sum_{k=2}^{\sigma-1}Q_n^{(k)}(z)-Q_n^{(\sigma)}(z)||_{K_{m_{\sigma}}-\zeta_0}=$$
$$=||z^{\lambda_n^{(\sigma-1)}+1}(f_n(z)-p_n(z)||_{K_{m_{\sigma}}-\zeta_0}\leq M^{\lambda_n^{(\sigma-1)}+1}||f_n-p_n||_{K}<M^{\lambda_n^{(\sigma-1)}+1}\theta_0^{\lambda_n^{(\sigma)}},$$ so as before:
$$||p(z)+\sum_{k=2}^{\sigma}Q_n^{(k)}(z-\zeta_0)-f_{j_{\sigma}}(z)||_{K_{m_{\sigma}}}\xrightarrow{n\to\infty}0.$$
To finish the proof, we claim that the function $$f(z)=p(z)+\sum_{k=2}^{\sigma_0}Q_{n_1}^{(k)}(z-\zeta_0),$$ for a suitable choice of $n_1\in\nn$ is near $g$ on $L$ and belongs to the set $E(\{m_{\sigma}\}_{\sigma=1}^{\sigma_0}, \{j_{\sigma}\}_{\sigma=1}^{\sigma_0},s,n_1)$.

 Let us see why:\\
Since $||\sum_{k=2}^{\sigma_0}Q_{n}^{(k)}(z-\zeta_0)||_L\to 0$, for $n_1$ large enough 
 $$||f-g||_L\leq ||p-g||_L+ ||\sum_{k=2}^{\sigma_0}Q_{n_1}^{(k)}(z-\zeta_0)||_L< 2||p-g||_L<\varepsilon.$$
 Moreover for $\sigma=1$
  it suffices to have $\lambda^{(1)}_{n_1}>deg p$, because then $T^{(\zeta_0)}_{\lambda_n^{(1)}}(f)=p$ so
$$||T^{(\zeta_0)}_{\lambda_n^{(1)}}(f)-f_{j_1}||_{K_{m_{1}}}=||p-f_{j_1}||_{K_{m_{1}}}<\frac{1}{s}.$$
 and  for $\sigma\geq2$:
 $$||T^{(\zeta_0)}_{\lambda_n^{(\sigma)}}(f)-f_{j_{\sigma}}||_{K_{m_{\sigma}}}=||p(z)+\sum_{k=2}^{\sigma}Q_{n_1}^{(k)}(z-\zeta_0)-f_{j_{\sigma}}(z)||_{K_{m_{\sigma}}}$$
and for $n_1$ large enough, it is less than $\di\frac{1}{s}$.
     \end{proof}
     We are now ready to prove that otherwise the class is empty.\\
     Let us start with a lemma (see also \cite{vmy1}).
      \begin{lemma}\label{sets} Let $\Omega\st \cc$ be a simply connected domain. Then their exists an increasing sequence of compact sets $E_k, \ k=1,2,\ldots$ with 
     the following properties:\\
     (i)  $E_k\in\mathcal{M}_{\Omega}, \ k=1,2,\ldots$\\
   (ii) $\bigcup_{k}E_k$ is closed and non-thin at $\infty$.
     \end{lemma}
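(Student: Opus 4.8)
The plan is to build the $E_k$ directly from the complement $\cc\sm\Omega$, exploiting that $\Omega$ is simply connected so that $F:=\cc_\infty\sm\Omega$ is a \emph{nondegenerate continuum} (compact, connected, with more than one point) containing $\infty$. Before constructing the sets I would dispose of property (ii)'s non-thinness, which is the genuinely potential-theoretic part: I claim $\cc\sm\Omega$ is non-thin at $\infty$. Since $F$ is a continuum through $\infty$, the boundary-bumping lemma gives, for every small spherical ball $B$ about $\infty$, a connected subset of $F$ joining $\infty$ to $\partial B$; such a sub-continuum has spherical diameter bounded below, hence capacity bounded below, and feeding these lower bounds into the Wiener criterion (equivalently, recognizing $\infty$ as a regular boundary point because the complement carries a continuum through it) shows $\infty$ is non-thin, cf.\ \cite{R}. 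This is where I expect the main difficulty to sit, since one must quantify the capacity of the continuum pieces uniformly across the dyadic annuli about $\infty$.

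For the construction I would split into two cases according to whether $\Omega$ is bounded. If $\Omega$ is bounded, fix $R$ with $\Omega\st\{|z|<R\}$, so that $\{|z|\ge R\}\st\cc\sm\Omega$, and simply set $E_k=\{t\in\rr:\ R\le t\le R+k\}$. Each $E_k$ is a segment, hence has connected complement and lies in $\mathcal{M}_{\Omega}$; the sequence is increasing; and $\bigcup_k E_k=[R,+\infty)$ is closed and, being a half-line, is non-thin at $\infty$. Here one cannot instead take annular slices of the complement, because those would encircle $\Omega$ and fail to have connected complement; this is precisely why the bounded case is treated apart.

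If $\Omega$ is unbounded I would set $S_k=(\cc\sm\Omega)\cap\overline{D}(0,k)$ and take $E_k=\widehat{S_k}$, the polynomial hull (equivalently $S_k$ together with the bounded components of its complement). The crucial point is that $E_k\st\cc\sm\Omega$: if $V$ is a bounded complementary component of $S_k$ meeting $\Omega$, then $\partial V\st S_k\st\cc\sm\Omega$ gives $\Omega\cap\partial V=\emptyset$, so $\Omega=(\Omega\cap V)\sqcup(\Omega\sm\overline{V})$ splits the connected set $\Omega$ into two open pieces; as $\Omega\cap V\ne\emptyset$ this forces $\Omega\st V$, contradicting that $\Omega$ is unbounded. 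Thus every bounded hole of $S_k$ avoids $\Omega$, and the filled set $E_k$ stays in $\cc\sm\Omega$. By construction $E_k$ is compact with connected complement, so $E_k\in\mathcal{M}_{\Omega}$, giving (i); monotonicity of the hull under inclusion makes $\{E_k\}$ increasing; and since $S_k\st E_k\st\cc\sm\Omega$ with $\bigcup_k S_k=\cc\sm\Omega$, we get $\bigcup_k E_k=\cc\sm\Omega$, which is closed.

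Property (ii) is then finished by the first paragraph, so the only substantial steps are the non-thinness of $\cc\sm\Omega$ at $\infty$ and the hole-avoidance argument guaranteeing $\widehat{S_k}\st\cc\sm\Omega$. I would present the non-thinness first, as a standalone claim about the continuum $F$ (citing \cite{R} and the analogous construction in \cite{vmy1}), and then carry out the elementary but care-requiring separation argument for the holes. The principal obstacle remains the uniform capacity estimate for the continuum crossing each annulus about $\infty$; everything else is formal monotonicity and polynomial-convexity bookkeeping.
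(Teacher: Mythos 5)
Your construction is correct and is, at bottom, the paper's own: the same case split on boundedness of $\Omega$, the segment $[R,R+k]$ with union $[R,+\infty)$ in the bounded case, and the complement truncated by closed disks in the unbounded case. Two of your steps are heavier than they need to be, though. First, the polynomial-hull step is vacuous: the complement of $S_k=(\cc\sm\Omega)\cap\overline{D}(0,k)$ is exactly $\Omega\cup\{|z|>k\}$, and since $\Omega$ is connected and unbounded it meets $\{|z|>k\}$, so this complement is already connected; hence $\widehat{S_k}=S_k$, and your hole-avoidance argument, while valid, proves that the holes you fill do not exist. This one-line observation is precisely how the paper gets (i), and it is the only place unboundedness of $\Omega$ is used. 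Second, what you flag as ``the principal obstacle'' --- non-thinness of $\cc\sm\Omega$ at $\infty$ --- is not proved by hand in the paper: it is a direct citation of Theorem 3.8.3 of \cite{R}, which states that a connected set containing more than one point is non-thin at every point of its closure. Applied to the continuum $\cc_\infty\sm\Omega\ni\infty$ (your $F$), it settles (ii) immediately, and it also disposes of the half-line $[R,+\infty)$ in the bounded case. Your boundary-bumping/Wiener-criterion plan is a correct sketch of a proof of that same standard fact (the capacity bound $c(\Gamma)\geq\mathrm{diam}(\Gamma)/4$ for a continuum crossing each annulus does make the Wiener series diverge), so there is no gap --- only an unnecessary reproof of a citable theorem. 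One small point in your favour: working with $F=\cc_\infty\sm\Omega$ on the sphere is the cleaner formulation, since $\cc\sm\Omega$ itself can fail to be connected for unbounded simply connected $\Omega$, whereas the paper's parenthetical asserts its connectedness; the argument is unaffected because thinness at $\infty$ ignores the point $\infty$ itself.
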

     \begin{proof}
       If $\Omega$ is not bounded we set $E_k=\Omega^c\cap \overline{D(\zeta_0,k)}, \ k\in \nn.$ Then the sets  $E_k$ belong to $\mathcal{M}$, they are disjoint from $\Omega$ and their union  is closed and non-thin at $\infty$.
 ( Note that $\di\bigcup_{k\in\nn}E_k= \Omega^c$  is connected and contains more than one points, so this follows from Theorem 3.8.3 p. 79 \cite{R}.)\\
 If, on the other hand $\Omega$ is bounded, fix $N\in\nn$ with $\Omega\st D(0,N)$ and set $E_k=[N, N+k]$, $k\in\nn$.
 Again $E_k\in\mathcal{M}$, they are disjoint from $\Omega$ and $\di\bigcup_{k\in\nn}E_k=[N,+\infty)$ is closed and non-thin at $\infty$.  In both case the sequence of sets $E_k$ is increasing.\\
 This completes the proof.\\
     \end{proof}
     \textit{ Proof of Theorem \ref{main}       }
 If there exists such a sequence $\{\mu_n\}_{n\in\nn}$, then in view of Theorem \ref{m} the class
  $U^{(\zeta_0)}_{mult}(\{\lambda^{(1)}_{n}\}_{n\in\nn}, \{\lambda^{(2)}_{n}\}_{n\in\nn},\ldots, \{\lambda^{(\sigma_0)}_{n}\}_{n\in\nn})$ is a $G_{\delta}$ and dense subset of $H(\Omega)$.\\
  Now, let us assume that there exists no such sequence. \\
We argue by a contrudiction and we assume that there exists  a function $$f\in U^{(\zeta_0)}_{mult}(\{\lambda^{(1)}_{n}\}_{n\in\nn}, \{\lambda^{(2)}_{n}\}_{n\in\nn},\ldots, \{\lambda^{(\sigma_0)}_{n}\}_{n\in\nn})$$ \\
In view of Lemma \ref{sets}, we may fix a sequence of sets $\{E_k\}$ as stated in the lemma.
As a result we may fix a strictly increasing sequence of natural numbers $\{n_k\}_{n\in\nn}$ such that the following holds:
\begin{equation}\label{odd}
||T^{(\zeta_0)}_{\lambda_{n_k}^{(\sigma)}}(f)||_{E_k}<\frac{1}{k}, \ \ \sigma\in\{1,2,\ldots,\sigma_0\} \ \text{ odd}.
\end{equation}
\begin{equation}\label{even}||T^{(\zeta_0)}_{\lambda_{n_k}^{(\sigma)}}(f)-1||_{E_k}<\frac{1}{k}, \ \ \sigma\in\{1,2,\ldots,\sigma_0\} \ \text{ even}.
\end{equation}
\textbf{Remark:} We may also choose $\{\lambda^{(\sigma)}_{n_k}\}_{k\in\nn}$ to be striclty increasing for every $\sigma=1,2,\ldots,\sigma_0$. (this is well known and has been stated often in articles on Universal Taylor Series see for example \cite{N1}). Thus we have
$\di\lim_{n\to\infty}\lambda^{(\sigma)}_{n_k}=+\infty$, for every $\sigma=1,2,\ldots,\sigma_0$.\\
\textbf{ Case I: } $\di \limsup_{k\to\infty}\frac{\lambda_{n_k}^{(2)}}{\lambda_{n_k}^{(1)}}<+\infty$.\\
We have assumed that the sequences are well-ordered, thus 
$$\di \limsup_{k\to\infty}\frac{\lambda_{n_k}^{(2)}}{\lambda_{n_k}^{(1)}}\geq\di \limsup_{k\to\infty}\frac{\lambda_{n_k}^{(1)}}{\lambda_{n_k}^{(2)}}.$$
Therefore, we may fix a positive number $C>0$ with:
$$\frac{\lambda_{n_k}^{(2)}}{\lambda_{n_k}^{(1)}}<C \text{ \ \ and \ \  }\frac{\lambda_{n_k}^{(1)}}{\lambda_{n_k}^{(2)}}<C, \ k\in\nn.$$
We consider two sets of natural numbers:
$$I=\{k\in\nn: \lambda_{n_k}^{(2)}\geq \lambda_{n_k}^{(1)}\}$$
$$J=\{k\in\nn: \lambda_{n_k}^{(1)}\geq \lambda_{n_k}^{(2)}\}.$$
At least one of the above sets is infinite. Lets us assume first that $I$ is infinite.
We set:
$$p_k(z)=\biggl(\frac{R}{z-\zeta_0}\biggl)^{\lambda_{n_k}^{(1)}}\biggl(T^{(\zeta_0)}_{\lambda_{n_k}^{(2)}}(f)(z)-T^{(\zeta_0)}_{\lambda_{n_k}^{(1)}}(f)(z)\biggl), \ k\in I$$
where $R=dist(\Omega^c,\zeta_0)>0$.\\
Then $p_k$ are polynomials and $deg p_k\leq \lambda_{n_k}^{(2)}-\lambda_{n_k}^{(1)}=\lambda_{n_k}^{(1)}\biggl(\frac{\lambda_{n_k}^{(2)}}{\lambda_{n_k}^{(1)}}-1\biggl)<C \lambda_{n_k}^{(1)}$.\\
Set $E=\biggl(\di\bigcup_{k\in\nn}E_k\biggl)\cap D(\zeta_0,2R)^c$.  Then $E$ is closed and non-thin at $\infty$\\
(note that non-thiness is a local property see p. 79 in \cite{R}).\\
Let $z\in E$. Then $z\in E_k$, $k$ large enough and $|z-\zeta_0|\geq 2R$. Thus for $k\in I$ large enough we have:
$$|p_k(z)|\leq \biggl|\frac{R}{z-\zeta_0}\biggl|^{\lambda_{n_k}^{(1)}}\cdot \biggl(||T^{(\zeta_0)}_{\lambda_{n_k}^{(2)}}||_{E_k}+  ||T^{(\zeta_0)}_{\lambda_{n_k}^{(1)}}||_{E_k}       \biggl)
\leq  \biggl(\frac{1}{2}\biggl)^{\lambda_{n_k}^{(1)}}(1+\frac{2}{k})<3 \biggl(\frac{1}{2}\biggl)^{\lambda_{n_k}^{(1)}}$$
(we have used relations (\ref{odd}) and (\ref{even}).)\\
Thus:
$$\limsup_{k\in I}|p_k(z)|^{\frac{1}{C\lambda^{(1)}_{n_k}}}\leq \biggl(\frac{1}{2}\biggl)^{\frac{1}{C}}<1, \ \ z\in E.$$
Moreover, if $\Gamma\st E$ is a continuum (compact, connected but not a singleton) we have:
$$\limsup_{k\in I}||p_k||_{\Gamma}^{\frac{1}{C\lambda^{(1)}_{n_k}}}\leq \biggl(\frac{1}{2}\biggl)^{\frac{1}{C}}<1.$$
Therefore, in view of Theorem 1 in  \cite{M-Y}, we conclude that $p_k\to 0, \ k\in I$ compactly on $\cc$.\\
Let $\xi\in\partial\Omega$ with $|\xi-\zeta_0|=R$. Then from the above
 $$\biggl(\frac{\xi-\zeta_0}{R}\biggl)^{\lambda_{n_k}^{(1)}}p_k(\xi)\to 0, \ k\in I$$
 But  $$\biggl(\frac{\xi-\zeta_0}{R}\biggl)^{\lambda_{n_k}^{(1)}}p_k(\xi)=T^{(\zeta_0)}_{\lambda_{n_k}^{(2)}}(f)(\xi)-T^{(\zeta_0)}_{\lambda_{n_k}^{(1)}}(f)(\xi).$$
 Thus,
 $$\biggl|\biggl(\frac{\xi-\zeta_0}{R}\biggl)^{\lambda_{n_k}^{(1)}}p_k(\xi)-1\biggl|\leq  ||T^{(\zeta_0)}_{\lambda_{n_k}^{(2)}}(f)-1||_{E_k}+||T^{(\zeta_0)}_{\lambda_{n_k}^{(1)}}(f)||_{E_k}\leq\frac{2}{k}\to 0.$$
 So we have arrived to a contrudiction.\\
 Now if $J$ is infinite, we set 
 $$p_k(z)=\biggl(\frac{R}{z-\zeta_0}\biggl)^{\lambda_{n_k}^{(2)}}\biggl(T^{(\zeta_0)}_{\lambda_{n_k}^{(1)}}(f)(z)-T^{(\zeta_0)}_{\lambda_{n_k}^{(2)}}(f)(z)\biggl), \ k\in J$$
 and following the same arguments again we arrive to a contrudiction.\\
 \textbf{ Case 2: } $\di \limsup_{k\to\infty}\frac{\lambda_{n_k}^{(2)}}{\lambda_{n_k}^{(1)}}=+\infty$. Then passing to a subsequence we may assume that
  $\di \lim_{k\to\infty}\frac{\lambda_{n_k}^{(2)}}{\lambda_{n_k}^{(1)}}=+\infty$.
Now  if $\di \limsup_{k\to\infty}\frac{\lambda_{n_k}^{(3)}}{\lambda_{n_k}^{(2)}}<+\infty$ we arrive to a contrudiction as in case 1. Therefore we conclude that 
 $\di \limsup_{k\to\infty}\frac{\lambda_{n_k}^{(3)}}{\lambda_{n_k}^{(2)}}=+\infty$, so passing to a subsequence we may assume that  $\di \lim_{k\to\infty}\frac{\lambda_{n_k}^{(3)}}{\lambda_{n_k}^{(2)}}=+\infty$.
 Continuing this way after a finite number of steps we will end up with a sequence $\{\mu_n\}_{n\in\nn}$ that we assumed that it does not exist.
 The proof of the theorem is complete.
\section{Independance of choice of expansion}
We start by giving the definition of Ostrowski-gaps, since they will play a central role in this section.
\begin{definition} Let $\di\sum_{k=0}^{\infty}a_{k} (z-\zeta_0)^k$ be a power series with positive radious of convergence. We say that it has Ostrowski gaps $(p_m,q_m), \ m=1,2,\ldots$, if there
exist two sequences of natural numbers $\{p_m\}_{m\in\nn}$ and $\{q_m\}_{m\in\nn}$ such that the following hold:\\
(i) $p_1<q_1\leq p_2<q_2\leq\ldots$ and $\di\lim_{m}\frac{q_m}{p_m}=\infty$\\
(ii)For $I=\cup_{m=1}^{\infty}\{p_m+1,\ldots,q_m\}$ we have $\di\lim_{\nu\in I}|a_{\nu}|^{\frac{1}{\nu}}=0.$
\end{definition}
\begin{theorem} The class $U^{(\zeta_0)}_{mult}(\{n\}_{n\in\nn}, \{n^2\}_{n\in\nn},\ldots,                   \{n^{\sigma_0}\}_{n\in\nn})$ is indepedant of the choice of $\zeta_0$.
\end{theorem}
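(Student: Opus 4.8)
The plan is to prove that the two classes coincide. Since the roles of $\zeta_0$ and any other $\zeta_1\in\Omega$ are symmetric, it suffices to show that $f\in U^{(\zeta_0)}_{mult}(\{n\},\{n^2\},\ldots,\{n^{\sigma_0}\})$ forces $f\in U^{(\zeta_1)}_{mult}(\{n\},\{n^2\},\ldots,\{n^{\sigma_0}\})$. I note first that for these sequences the growth condition of Theorem \ref{main} holds with $\mu_n=n$ (indeed $\lambda^{(1)}_n=n\to\infty$ and $\lambda^{(\sigma+1)}_n/\lambda^{(\sigma)}_n=n\to\infty$), so by Theorem \ref{m} both classes are dense $G_\delta$ subsets of $H(\Omega)$; the content of the statement is therefore the inclusion, not non-emptiness. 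Writing $f(z)=\sum_{k\ge0}a_k(z-\zeta_0)^k$ and $S^{(\zeta)}_N(f)(z)=\sum_{k=0}^N\frac{f^{(k)}(\zeta)}{k!}(z-\zeta)^k$, we have $T^{(\zeta)}_N(f)=S^{(\zeta)}_N(f)-f(\zeta)$; hence any estimate of the form $||S^{(\zeta_0)}_N(f)-S^{(\zeta_1)}_N(f)||_K\to0$ transfers an approximation at $\zeta_0$ into one at $\zeta_1$ up to the fixed additive constant $f(\zeta_0)-f(\zeta_1)$, which is harmless since translating a dense subset of $A(K)$ by a constant keeps it dense.

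Fix $K_1,\ldots,K_{\sigma_0}\in\mathcal{M}_\Omega$ and a target $g_\sigma\in A(K_\sigma)$ for each $\sigma$; by the previous remark it is enough to realise the shifted targets $h_\sigma=g_\sigma+\bigl(f(\zeta_0)-f(\zeta_1)\bigr)$ at $\zeta_0$ at indices at which I can change the centre. The device I would use to manufacture such indices is to force overconvergence behaviour exactly where the approximation takes place. Let $E_k$ be the increasing, non-thin-at-$\infty$ sets of Lemma \ref{sets}, and set $R=dist(\zeta_0,\Omega^c)$. Applying the universality at $\zeta_0$ to suitably enlarged data, one aims to obtain a strictly increasing sequence $\nu_i$ and an index $k_i\to\infty$ such that, simultaneously for every $\sigma$, $||T^{(\zeta_0)}_{\nu_i^{\sigma}}(f)-h_\sigma||_{K_\sigma}\to0$ and $||T^{(\zeta_0)}_{\nu_i^{\sigma}}(f)||_{E_{k_i}}\to0$. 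Exactly as in Case I of the proof of Theorem \ref{main}, I then form, for $\sigma=1,\ldots,\sigma_0-1$, the normalised block-differences
$$p^{(\sigma)}_i(z)=\Bigl(\frac{R}{z-\zeta_0}\Bigr)^{\nu_i^{\sigma}}\bigl(T^{(\zeta_0)}_{\nu_i^{\sigma+1}}(f)-T^{(\zeta_0)}_{\nu_i^{\sigma}}(f)\bigr),$$
which are polynomials of degree at most $\nu_i^{\sigma+1}-\nu_i^{\sigma}$ whose root-size on $E=(\bigcup_kE_k)\cap D(\zeta_0,2R)^c$ is bounded away from $1$ because of the geometric factor. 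Theorem 1 of \cite{M-Y} then yields $p^{(\sigma)}_i\to0$ locally uniformly on $\cc$, i.e. overconvergence of $f$ at $\zeta_0$ across the index blocks $(\nu_i^{\sigma},\nu_i^{\sigma+1}]$.

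With this block structure attached to the very indices $\nu_i^{\sigma}$ at which the targets are approximated, I would invoke the Ostrowski-type overconvergence and centre-independence machinery of \cite{GLM} and \cite{M-Y}: the associated partial sums converge to $f$ locally uniformly inside $\Omega$, the expansion of $f$ at $\zeta_1$ carries the same block structure, and consequently $||S^{(\zeta_0)}_{\nu_i^{\sigma}}(f)-S^{(\zeta_1)}_{\nu_i^{\sigma}}(f)||_{K_\sigma}\to0$ for each $\sigma$. Combined with the constant-shift reduction of the first paragraph this gives $T^{(\zeta_1)}_{\nu_i^{\sigma}}(f)\to g_\sigma$ uniformly on $K_\sigma$, simultaneously for all $\sigma$. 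As $K_1,\ldots,K_{\sigma_0}$ and $g_1,\ldots,g_{\sigma_0}$ were arbitrary, this is precisely $f\in U^{(\zeta_1)}_{mult}(\{n\},\{n^2\},\ldots,\{n^{\sigma_0}\})$, and the theorem follows.

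The hard part, and the step I would spend the most care on, is the simultaneous multi-level alignment in the second paragraph: realising the arbitrary targets $h_\sigma$ on $K_\sigma$ and the vanishing on the non-thin set $E_{k_i}$ at one and the same family of indices $\nu_i$ and for all $\sigma_0$ levels at once. This requires enlarging each $K_\sigma$ by a far-away piece of $E_{k_i}$ while keeping the union compact with connected complement and disjoint from $\Omega$ (so that the universality applies), and it forces one to handle the lowest level $\sigma=1$ --- whose index $\nu_i$ is only a left endpoint of a block --- through the endpoint case of the centre-change estimate. Equally delicate is upgrading the locally uniform vanishing of the $p^{(\sigma)}_i$ into the quantitative statement $||S^{(\zeta_0)}_{\nu_i^{\sigma}}(f)-S^{(\zeta_1)}_{\nu_i^{\sigma}}(f)||_{K_\sigma}\to0$ over the full degree $\nu_i^{\sigma}$ of these polynomials rather than coefficientwise; this uniform control on the far compacta $K_\sigma$ is exactly what the Ostrowski-gap overconvergence results of \cite{GLM} and \cite{M-Y} are designed to supply.
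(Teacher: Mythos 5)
Your high-level strategy is the paper's: use the approximation of $0$ on the non-thin-at-infinity sets $E_k$ to force an Ostrowski-gap structure on the expansion of $f$ at $\zeta_0$, then move the centre with the overconvergence machinery of \cite{Luh}, \cite{GLM} and lemma 9.2 of \cite{N1}. The flaw is in the step you yourself single out as the crux: how the gaps are produced and, above all, where they sit. You place them on the blocks $(\nu_i^{\sigma},\nu_i^{\sigma+1}]$ between consecutive levels and argue via the Case I device of Theorem \ref{main} together with Theorem 1 of \cite{M-Y}. That theorem requires $\limsup_i ||p_i^{(\sigma)}||_{\Gamma}^{1/d_i}\leq\theta<1$ where $d_i$ bounds the \emph{degree} of $p_i^{(\sigma)}$. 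Here $\deg p_i^{(\sigma)}$ can be as large as $\nu_i^{\sigma+1}-\nu_i^{\sigma}$, while the geometric factor only yields $|p_i^{(\sigma)}(z)|\leq 3\cdot 2^{-\nu_i^{\sigma}}$ for $z\in E$ and $i$ large; since $\nu_i^{\sigma+1}/\nu_i^{\sigma}=\nu_i\to\infty$,
$$\bigl(3\cdot 2^{-\nu_i^{\sigma}}\bigr)^{\frac{1}{\nu_i^{\sigma+1}-\nu_i^{\sigma}}}=3^{\frac{1}{\nu_i^{\sigma+1}-\nu_i^{\sigma}}}\cdot\Bigl(\tfrac{1}{2}\Bigr)^{\frac{1}{\nu_i-1}}\longrightarrow 1,$$
so the hypothesis of \cite{M-Y}, Theorem 1 fails. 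This is structural, not reparable: the Case I trick works exactly when consecutive exponent sequences have bounded ratio, which is the opposite of the situation for $\{n\},\{n^2\},\ldots,\{n^{\sigma_0}\}$ (and had it worked here, it would prove the class empty, contradicting Theorem \ref{m}). Worse, the conclusion you are after is false in general: in the definition of the class you may take $K_{\sigma}=K_{\sigma+1}=K$ with targets $h_{\sigma}\neq h_{\sigma+1}$; then $T^{(\zeta_0)}_{\nu_i^{\sigma+1}}(f)-T^{(\zeta_0)}_{\nu_i^{\sigma}}(f)\to h_{\sigma+1}-h_{\sigma}\neq 0$ on $K$, whereas Ostrowski gaps spanning $(\nu_i^{\sigma},\nu_i^{\sigma+1}]$ would, by \cite{Luh}, force this difference to tend to $0$ compactly. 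So no indices produced by the universality of $f$ can carry gaps between consecutive levels; moreover, even granting $p_i^{(\sigma)}\to 0$ compactly, this controls the block sums only on $\overline{D(\zeta_0,R)}$, since undoing the normalisation multiplies by $((z-\zeta_0)/R)^{\nu_i^{\sigma}}$, which is exponentially large on the $K_{\sigma}$.

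The paper's proof avoids this by placing the gaps just \emph{below} each approximation index rather than between levels. With $n_k$ chosen so that $T^{(\zeta_0)}_{n_k^{\sigma}}(f)$ approximates $g_{\sigma}$ on $K_{\sigma}$ and $0$ on $E_k$ for all $\sigma$ simultaneously, it applies Lemma 2 of \cite{M-Y} --- a growth-restriction lemma whose hypothesis is $\limsup\leq 1$ with the root taken with respect to the actual degree $n_k^{\sigma}$, which the vanishing on $E_k$ genuinely supplies --- to get $||T^{(\zeta_0)}_{n_k^{\sigma}}(f)||^{1/n_k^{\sigma}}_{|z|=k}\leq 2$ along a subsequence. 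Then, with the key choice $p_k=\bigl[n_k/(\log k)^{1/\sigma_0}\bigr]+1$, Cauchy estimates on $|z|=k$ give $|a_{\nu}|^{1/\nu}\leq 2^{(n_k/p_k)^{\sigma}}/k\leq k^{\log 2-1}\to 0$ for $p_k^{\sigma}\leq\nu\leq n_k^{\sigma}$, because $(n_k/p_k)^{\sigma}\leq\log k$ while $(n_k/p_k)^{\sigma}\to\infty$; this single shrinking factor works for all $\sigma$ at once. The expansion therefore has Ostrowski gaps $(p_k^{\sigma},n_k^{\sigma})$ ending exactly at the indices where the approximation takes place, and then \cite{Luh} (overconvergence across each gap) combined with lemma 9.2 of \cite{N1} (centre change at the left endpoints $p_k^{\sigma}$) yields $\sup_{\zeta\in L}||T^{(\zeta)}_{n_k^{\sigma}}(f)-g_{\sigma}||_{K_{\sigma}}\to 0$. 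Your outline is missing precisely this device --- the $(\log k)^{1/\sigma_0}$ shrinking and the use of the growth lemma rather than the decay theorem --- and without it the argument does not close.
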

\begin{proof}Let $f\in U^{(\zeta_0)}_{mult}(\{n\}_{n\in\nn}, \{n^2\}_{n\in\nn},\ldots,                   \{n^{\sigma_0}\}_{n\in\nn})$.  Let $K_1,\ldots,K_{\sigma_0}\in\mathcal{M}_{\Omega}$, $g_1\in A(K_1),\ldots,g_{\sigma_0}\in A(K_{\sigma})$ and $L\st\Omega$ compact.
Fix a sequence $\{E_k\}_{k\in\nn}$ as in lemma \ref{sets} with the additional property that every  $E_k$ disjoint from $\bigcup_{\sigma}K_{\sigma}$ and set $E=\cup_{k}E_k$.
Then, there exists a strictly increasing sequence $\{n_k\}_{k\in\nn}$ with:
$$ ||T^{(\zeta_0)}_{n_k^{\sigma}}(f)-g_{\sigma}||_{K_{\sigma}}\xrightarrow{n\to\infty}0, \ \ \text{ for every } \sigma=1,2,\ldots,\sigma_0                    $$
$$   ||T^{(\zeta_0)}_{n_k^{\sigma}}(f)||_{E_k}\xrightarrow{n\to\infty}0, \ \ \text{ for every } \sigma=1,2,\ldots,\sigma_0                                    $$
Note that the functions $T^{(\zeta_0)}_{n_k^{\sigma}}(f)$ are polynomials of degree less or equal to $n_k^{\sigma}$. 
Moreover, for $k$ large enough 
$$ ||T^{(\zeta_0)}_{n_k^{\sigma}}(f)||_{E_k}\leq 1\Rightarrow ||T^{(\zeta_0)}_{n_k^{\sigma}}(f)||^{\frac{1}{n_k^{\sigma}}}_{E_k}\leq 1
\Rightarrow \limsup_{k}|T^{(\zeta_0)}_{n_k^{\sigma}}(f)(z)|^{\frac{1}{n_k^{\sigma}}}\leq 1, \ \ z\in E.$$
In view of lemma 2 in \cite{M-Y} 
$$ \limsup_{k}||T^{(\zeta_0)}_{n_k^{\sigma}}(f)||^{\frac{1}{n_k^{\sigma}}}_{|z|=R}\leq 1, \ \ \forall R>0.$$
Passing to a subsequence, we may assume that:
$$||T^{(\zeta_0)}_{n_k^{\sigma}}(f)||^{\frac{1}{n^{\sigma}_k}}_{|z|=k}\leq 2, \ \sigma=1,2,\ldots,\sigma_0$$
Now set:
$$p_k=\biggl[\frac{n_k}{(\log k)^{\frac{1}{\sigma_0}}}\biggl]+1.$$
Then
$$   \frac{1}{(\log k)^{\frac{1}{\sigma_0}}}     \leq \frac{p_k}{n_k}\leq    \frac{1}{(\log k)^{\frac{1}{\sigma_0}}}+\frac{1}{n_k}  $$
Thus,
$\di \biggl( \frac{n_k}{p_k}\biggl)^{\sigma} \leq \log k $ and $\di \biggl( \frac{n_k}{p_k}\biggl)^{\sigma}\to +\infty$ for all $\sigma=1,2,\ldots,\sigma_0$.\\
Moreover, if $p_k^{\sigma}\leq \nu\leq n^{\sigma}_k$ we have:
$$|a_{\nu}|^{\frac{1}{\nu}}\leq \frac{||T^{(\zeta_0)}_{n_k^{\sigma}}(f)||^{\frac{1}{\nu}}_{|z|=k}}{k}=\frac{\biggl(||T^{(\zeta_0)}_{n_k^{\sigma}}(f)||^{\frac{1}{n^{\sigma}_k}}_{|z|=k}\biggl)^{\frac{n^{\sigma}_k}{\nu}}}{k} \leq
\frac{2^{(\frac{n_k}{p_k})^{\sigma}}}{k}\leq \frac{2^{\log k}}{k}=k^{\log 2-1}$$
Thus $\lim_{\nu}|a_{\nu}|^{\frac{1}{\nu}}=0$ and the power series has Ostrowski-gaps $(p_k^{\sigma}, n_k^{\sigma})$, \\ $ k=1,2,\ldots$ for every $\sigma=1,2,\ldots,\sigma_0$.\\
It is known (see \cite{Luh}) that in this case:
$$T^{(\zeta_0)}_{p_k^{\sigma}}(f)- T^{(\zeta_0)}_{n_k^{\sigma}}(f)\xrightarrow{k\to \infty} 0, \text{ compactly on } \cc.$$
Moover, in view of lemma 9.2 \cite{N1} (see also theorem 1 \cite{Luh}) we have:
$$\sup_{\zeta\in L}\sup_{z\in K}|T^{(\zeta_0)}_{p_k^{\sigma}}(f)(z)-T^{(\zeta)}_{p_k^{\sigma}}(f)(z)|\xrightarrow{k\to \infty} 0,$$
for every choice of compact sets $L\st\Omega$ and $K\st \cc$.\\
Thus:
$$\sup_{\zeta\in L} ||T^{(\zeta)}_{n_k^{\sigma}}(f)-g_{\sigma}||_{K_{\sigma}}\xrightarrow{n\to\infty}0, \ \ \text{ for every } \sigma=1,2,\ldots,\sigma_0 .$$
So $f\in  U^{(\zeta)}_{mult}(\{n\}_{n\in\nn}, \{n^2\}_{n\in\nn},\ldots,                   \{n^{\sigma_0}\}_{n\in\nn})$ for every $\zeta\in\Omega$ and the result follows.

\end{proof}
\textbf{Remark:} In this case we have d-hypercyclicity for uncountable many sequences of operators $\{T^{(\zeta)}_{n^{\sigma}}\}_{n\in\nn},$ $\sigma=1,2,\ldots,\sigma_0$ and $\zeta\in \Omega$.

 V.Vlachou, \\
Department of Mathematics,\\
University of Patras,\\
26500 Patras,GREECE\\
e-mail: vvlachou@math.upatras.gr

\end{document}